\documentclass[12pt]{amsart}
\usepackage{amssymb, amsmath}
\title{New classes of groups which are equational domains }

\author{\sc O. Al-Raisi}
\address{O. Al Raisi: Department of  Mathematics,  College of Science, Sultan Qaboos University, Muscat, Oman}
\email{omartalibmiran@gmail.com}

\author{M. Shahryari}
\address{M. Shahryari: Department of  Mathematics,  College of Science, Sultan Qaboos University, Muscat, Oman}
\email{m.ghalehlar@squ.edu.om}

\markright{\protect New classes of equational domains}
\pagestyle{myheadings}

\newtheorem{corollary}{Corollary}

\newtheorem {theorem}{Theorem}

\numberwithin{equation}{section}


\newcommand{\CSNk}{\mathrm{CSN}_k}

\begin{document}

\maketitle
\begin{abstract}
A group is CSA, if all of its maximal abelian subgroups are malnormal. It is known that every non-abelian  CSA group is an equational domain. We generalize this result in two directions: we show that for  a non-nilpotent group $G$ and a fixed positive integer $k$, if all maximal elements in the set of class $k$ nilpotent subgroups of $G$ are malnormal, then $G$ is an equational domain. Also, we prove that if a group $G$ is not locally nilpotent and if every maximal locally nilpotent subgroup of $G$ is malnormal, then $G$ is an equational domain.
\end{abstract}
\vspace{1cm}

{\bf AMS Subject Classification}  20F70.\\
{\bf Keywords} CSA groups; equational domains; algebraic sets; coordinate groups; conjugately separated nilpotent subgroups.

\vspace{1cm}

A group $G$ is called CSA (conjugately separated abelian), if every maximal abelian subgroup  of $G$ is {\em malnormal}. This means that, if $H$ is a maximal abelian subgroup of $G$ and $x\in G\setminus H$, then $H\cap H^x=1$. The class of CSA groups is quite wide and is of considerable importance in the study of residually free groups, universal theory of non-abelian free groups, limit groups, exponential groups and equational domains in algebraic geometry over groups (see \cite{BMR}, \cite{Champ}, \cite{MR1} , and \cite{MR2}).

In \cite{Shah}, the second author initiated the study of $\CSNk$ groups which are natural generalizations of CSA groups. A group $G$ is $\CSNk$ (conjugately separated nilpotent of class $k$) if and only if every maximal element in the set of class $k$ nilpotent subgroup of $G$ is malnormal. The case $k=1$  obviously coincides with ordinary CSA groups. It is shown in \cite{Shah}  that the class of $\CSNk$ groups  share many similar properties with the classical case of  CSA groups.

A group $G$ is called an {\em equational domain} if and only if, for every positive integer $n$, the union of two algebraic sets in $G^n$ is also an algebraic set. In \cite{BMR}, it is shown that every non-abelian  CSA group is an equational domain. We prove that this result is a special case of a general theorem: if a non-nilpotent group is $\CSNk$, then it is an equational domain. Furthermore, we show that if a group $G$ is not locally nilpotent but all of its maximal locally nilpotent subgroups are malnormal, then $G$ is an equational domain. Many examples of $\CSNk$ groups are given in \cite{Shah}; a method of constructing more examples will be provided here. Starting with a free nilpotent group $A$ in the variety of class $k$ nilpotent groups, we consider an $A$-free group $F$ (which is by definition, a free product of some copies of $A$) and we prove that if $Y\subseteq F^n$ is an irreducible algebraic set, then its coordinate group $\Gamma(Y)$ is $\CSNk$.

In what follows, we use these notations: all simple commutators $[ x_1, x_2, \ldots, x_{k+1}]$ are left aligned. Given a group $G$ and a subset $X\subseteq G$, the subgroup of $G$ generated by $X$ will be denoted by $\langle X\rangle$. A conjugate $a^x$ (or $H^x$) is $x^{-1}ax$ (similarly, $x^{-1}Hx$). By a nilpotent group of class $k$, we mean a group in which the identity $[ x_1, x_2, \ldots, x_{k+1}]=1$ holds.

\section{Algebraic sets and equational domains}
We use the same notations as in \cite{BMR} and \cite{DMR}. Suppose $G$ is a group and $X=\{x_1, x_2, \ldots, x_n\}$ is a set of variables. Let $\mathbb{F}[X]$ be the free group generated by $X$ and $G[X]=G\ast \mathbb{F}[X]$ be the free product of $G$ and $\mathbb{F}[X]$. Every element $G[X]$ is a group word in variables $x_1, x_2, \ldots, x_n$ with coefficients from $G$. If $w(x_1, \ldots, x_n)\in G[X]$, then $w(x_1, \ldots, x_n)\approx 1$ is called a group equation. A group $H$ containing an isomorphic copy of $G$ is termed a $G$-group. For a given equation $w(x_1, \ldots, x_n)\approx 1$, the set
$$
\{ (h_1, \ldots, h_n)\in H^n:\ w(h_1, \ldots, h_n)=1\}
$$
is the solution set of the given equation in $H$. A system of equations with coefficients from $G$ is any set of equations $S\approx 1$, where $S\subseteq G[X]$. The {\em algebraic set} corresponding to this system is the set of all common solutions of all equations in $S\approx 1$ in $H^n$. We denote this algebraic set by $V_H(S)$.

A topology can be defined on $H^n$ using the collection of algebraic sets as a subbasis for the closed sets; thus, algebraic sets, finite unions of algebraic sets, and  arbitrary intersections of finite unions of algebraic sets are all closed sets. This is called the Zariski topology on $H^n$. This topology is Noetherian if and only if for every $S\subseteq G[X]$, there exists a finite subset $S_0\subseteq S$ such that $V_H(S)=V_H(S_0)$. In this case, we say that the group $H$ is $G$-{\em equationally Noetherian}. If $H$ is $G$-equationally Noetherian, then every algebraic set can be decomposed uniquely as a finite union of {\em irreducible} algebraic sets. The group $H$ is called a $G$-domain if and only if for any natural number $n$, the union of every two algebraic sets in $H^n$ is again an algebraic set. In this case, every closed set in the Zariski topology is an algebraic set. There is another definition for the concept of $G$-domain in terms of {\em zero divisors}. An element $x\in H$ is called a zero divisor, if there exists a non-identity element $y\in H$ such that for every $g\in G$, the equation $[x^g, y]=1$ holds. In \cite{DMR}, it is proved that a $G$-group $H$ is $G$-domain if and only if $H$ does not contain any non-trivial zero divisor. It is not hard to see that $H$ is a $G$-domain if and only if it satisfies the following property: for every non-trivial subgroup $K\leq H$, if $G$ normalizes $K$ (i.e., $G\subseteq N_H(K)$), then the centralizer $C_H(K)$ is trivial. This becomes more interesting in the case when $H=G$; that is, the case of {\em Diophantine geometry} over $G$. In this Diophantine case, we use the phrases "equationally Noetherian" and "domain" instead of "$G$-equationally Noetherian" and "$G$-domain", respectively. As a result, $G$ is a domain if and only if for every non-trivial normal subgroup $K\leq G$, we have $C_G(K)=1$. Looking at domains from this viewpoint, one can see immediately that a non-abelian finite group is a domain if and only if it is monolithic, that is, it has a unique minimal normal subgroup. In the case of infinite groups, besides monolithic groups, there are, of course, many other groups which are domains. As an example, every non-abelian free group is a domain. This can be generalized to CSA groups. Let $G$ be a CSA group and $K$ be a non-trivial normal subgroup of $G$; let $C_G(K)\neq 1$ and choose a non-trivial element $x\in G$ such that $[x, K]=1$. As every CSA group is {\em commutative transitive} (that is, the binary relation $[a, b]=1$ is an equivalence relation on the set of non-trivial elements of $G$), the subgroup $K$ must be abelian. Let $M$ be a maximal abelian subgroup of $G$ containing $K$. This subgroup $M$ must be malnormal. But, as $K$ is a normal subgroup, for any $g\in G$ we have $K\subseteq M\cap M^g$, which is a contradiction. This shows that every CSA group is a domain (see also \cite{BMR}).  As we mentioned in the introduction, our aim is to generalized this result to a wider class of groups. We only consider the most interesting case; namely, that of Diophantine geometry over a group $G$. Before proceeding however, one more concept needs to be defined. Let $Y\subseteq G^n$ be an arbitrary subset. A normal subgroup of $G[X]$ can be defined as follows:
$$
\mathrm{Rad}(Y)=\{ w\in G[X]:\ Y\subseteq V_G(w\approx 1)\}.
$$
This is the {\em radical } of $Y$ over $G$. The quotient group
$$
\Gamma(Y)=G[X]/\mathrm{Rad}(Y)
$$
is called the {\em coordinate group} of $Y$. In the next section, we will need the following {\em unification theorem} from \cite{BMR}.

\begin{theorem}
Let $G$ be an equationally Noetherian domain and $Y\subseteq G^n$ be an  algebraic set. Then the following are equivalent:
\begin{enumerate}
\item{$Y$ is irreducible}
\item{$\Gamma(Y)$ is $G$-equationally Noetherian $G$-domain}
\item{$\Gamma(Y)$ is fully residually $G$ as a $G$-group}
\item{$\Gamma(Y)$ has the same universal theory as $G$}
\end{enumerate}
\end{theorem}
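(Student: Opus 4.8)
The plan is to establish the four conditions equivalent via the cycle $(1)\Rightarrow(3)\Rightarrow(2)\Rightarrow(1)$ together with $(3)\Leftrightarrow(4)$, the whole argument resting on one dictionary. Since $G$ is a domain, finite unions of algebraic sets in $G^n$ are again algebraic, so the closed sets of the Zariski topology on $G^n$ are precisely the algebraic sets, $G^n$ is a Noetherian space, and $Y=V_G(\mathrm{Rad}(Y))$ for every non-empty algebraic set $Y$. The key point is then that the assignment $p=(g_1,\dots,g_n)\mapsto\pi_p$, where $\pi_p\colon\Gamma(Y)\to G$ is the $G$-homomorphism with $x_i\mapsto g_i$, is a bijection between $Y$ and the set of all $G$-homomorphisms $\Gamma(Y)\to G$, with $\pi_p(w)=w(p)$ for $w\in G[X]$. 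I would first record two consequences: the canonical map $G\to\Gamma(Y)$ is injective (a non-trivial element of $G$ cannot lie in $\mathrm{Rad}(Y)$ when $Y\neq\emptyset$), so $G$ is a $G$-subgroup of $\Gamma(Y)$; and $\Gamma(Y)$ embeds into the direct power $G^{Y}$ via $p\mapsto\pi_p$, whence, using the standing hypothesis that $G$ is equationally Noetherian, one obtains for free that $\Gamma(Y)$ is $G$-equationally Noetherian (this is part of the basic theory of \cite{BMR}). So the only substantive content of $(2)$ is that $\Gamma(Y)$ is a $G$-domain, which I will analyse through the zero-divisor criterion recalled in the text. (The degenerate case $Y=\emptyset$ is excluded throughout, as it is not irreducible.)

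For $(1)\Rightarrow(3)$: to show $\Gamma(Y)$ is fully residually $G$, i.e.\ that any finite family of non-trivial elements is moved off $1$ by a single $G$-homomorphism to $G$, take non-trivial $w_1,\dots,w_m\in\Gamma(Y)$, lift each to a word $w_j\in G[X]\setminus\mathrm{Rad}(Y)$, and note that $U_j=\{p\in Y:w_j(p)\neq1\}$ is a non-empty open subset of $Y$; irreducibility forces $\bigcap_j U_j\neq\emptyset$, and any $p$ in this intersection gives the desired $\pi_p$. For $(3)\Rightarrow(2)$: if $\Gamma(Y)$ had a non-trivial zero divisor $x$ with witness $y\neq1$, so that $[x^{g},y]=1$ for all $g\in G$, then discriminating $\{x,y\}$ by some $\pi_p$ and using $\pi_p|_G=\mathrm{id}$ gives $[\pi_p(x)^{g},\pi_p(y)]=1$ for all $g\in G$ with $\pi_p(x),\pi_p(y)\neq1$, so $\pi_p(x)$ is a non-trivial zero divisor in $G$, contradicting that $G$ is a domain; hence $\Gamma(Y)$ is a $G$-domain and, with the automatic $G$-equational Noetherianity, we get $(2)$. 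For $(2)\Rightarrow(1)$: if $Y$ is not irreducible, write $Y=Y_1\cup Y_2$ with $Y_1,Y_2$ proper closed subsets and pick $u\in\mathrm{Rad}(Y_1)\setminus\mathrm{Rad}(Y)$, $v\in\mathrm{Rad}(Y_2)\setminus\mathrm{Rad}(Y)$; every $p\in Y$ lies in $Y_1$ or in $Y_2$, so $u(p)=1$ or $v(p)=1$, hence $[u^{g},v](p)=1$ for all $g\in G$ and all $p\in Y$, i.e.\ $[u^{g},v]\in\mathrm{Rad}(Y)$ for all $g\in G$; thus the non-trivial images $\bar u,\bar v$ in $\Gamma(Y)$ satisfy $[\bar u^{g},\bar v]=1$ for all $g\in G$, exhibiting $\bar u$ as a non-trivial zero divisor and contradicting $(2)$.

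For $(3)\Leftrightarrow(4)$: one inclusion of universal theories is automatic, since $G\le\Gamma(Y)$ as $G$-groups and universal sentences with constants from $G$ pass to substructures. Assuming $(3)$, the reverse inclusion follows by a disjunctive-normal-form argument: if a universal sentence fails in $\Gamma(Y)$ then some quantifier-free formula holds at a tuple $\bar w$ in $\Gamma(Y)$, hence, choosing a disjunct, finitely many relations $u_i(\bar w)=1$ and non-relations $v_j(\bar w)\neq1$ hold; a single $\pi_p$ discriminating the $v_j(\bar w)$ transports this configuration to $G$, so the sentence fails in $G$ too. Conversely, assuming $(4)$, to discriminate non-trivial $w_1,\dots,w_m\in\Gamma(Y)$ I would use equational Noetherianity of $G$ to replace $\mathrm{Rad}(Y)$ by a finite system $S_0=\{s_1,\dots,s_k\}$ with $V_G(S_0)=Y$; then the existential sentence $\exists x_1\cdots\exists x_n\big(\bigwedge_i s_i=1\wedge\bigwedge_j w_j\neq1\big)$, with the $s_i,w_j$ read as words in the $x_\ell$, holds in $\Gamma(Y)$, witnessed by the images of the variables; its negation is universal, so by universal equivalence the sentence holds in $G$, providing a point of $Y$ whose $\pi_p$ discriminates the $w_j$.

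I expect $(4)\Rightarrow(3)$ to be the crux: converting a purely model-theoretic hypothesis into genuine discrimination requires collapsing the a priori infinite radical $\mathrm{Rad}(Y)$ to an equivalent finite system of equations, and this is exactly where the assumption that $G$ is equationally Noetherian cannot be omitted. The disjunctive-normal-form bridge in $(3)\Leftrightarrow(4)$ is the other delicate point, though largely mechanical once the point--homomorphism dictionary is in hand; by contrast the geometric steps $(1)\Rightarrow(3)$, $(3)\Rightarrow(2)$ and $(2)\Rightarrow(1)$ are comparatively soft, amounting to careful bookkeeping with open sets, zero divisors and radicals.
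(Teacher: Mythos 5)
The paper offers no proof of this theorem: it is imported verbatim from \cite{BMR} as the ``unification theorem,'' so there is nothing in the text to compare your argument against. Your reconstruction is correct and is, in all essentials, the original \cite{BMR} proof: the bijection $p\mapsto\pi_p$ between points of a non-empty algebraic set $Y$ and $G$-homomorphisms $\Gamma(Y)\to G$, the cycle $(1)\Rightarrow(3)\Rightarrow(2)\Rightarrow(1)$ run through non-empty open sets, the zero-divisor criterion and radicals, and the reduction of $\mathrm{Rad}(Y)$ to a finite subsystem in $(4)\Rightarrow(3)$. Two remarks. First, the only step you genuinely delegate is the claim that the embedding $\Gamma(Y)\hookrightarrow G^{Y}$ yields $G$-equational Noetherianity; this does follow from \cite{BMR} (unrestricted direct powers of an equationally Noetherian group are $G$-equationally Noetherian, and the property passes to $G$-subgroups), but it is a real lemma rather than a triviality and should be cited as such. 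Second, statement $(4)$ must be read as equality of universal theories in the language enriched with constants for the elements of $G$: your $(4)\Rightarrow(3)$ step writes down an existential sentence whose coefficients lie in $G$, so the argument would not survive a constant-free reading. With those two points made explicit, the proof is complete and matches the intended source.
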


\section{Main results}
The notion of a CSA group is generalized in \cite{Shah} as follows: Let $k$ be a fixed positive integer and $G$ be a group. Suppose all maximal elements in the family of class $k$ nilpotent subgroups of $G$ are malnormal. Then we say that $G$ is $\CSNk$. It is proved that this new class shares many properties with the class of CSA groups. As an example, it is shown that every $\CSNk$ group is $\mathrm{NT}_k$: if $H_1$ and $H_2$ are class $k$ nilpotent subgroups of $G$ with non-trivial intersection, then $\langle H_1, H_2\rangle$ is also nilpotent of class $k$. Our first result shows that non-nilpotent $\CSNk$ groups are domains.

\begin{theorem}
Suppose $G$ is $\CSNk$ but not nilpotent. Then $G$ is a domain.
\end{theorem}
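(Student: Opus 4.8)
The plan is to use the characterization recorded in the previous section: a group $G$ is a domain if and only if $C_G(K)=1$ for every non-trivial normal subgroup $K\leq G$. So I would argue by contradiction, assuming that some non-trivial $K\trianglelefteq G$ has $C_G(K)\neq 1$, and fixing an element $1\neq x\in C_G(K)$.

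The key step --- the analogue of using commutative transitivity in the CSA argument --- is to show that all of $K$ lies inside a single maximal class $k$ nilpotent subgroup. Since $\langle x\rangle$ is abelian, hence nilpotent of class $\leq k$, and since the law $[x_1,\ldots,x_{k+1}]=1$ is universal and therefore passes to unions of chains, a Zorn's lemma argument produces a maximal class $k$ nilpotent subgroup $N$ of $G$ with $x\in N$; as $G$ is $\CSNk$, $N$ is malnormal. Now for an arbitrary $a\in K$, the subgroup $\langle x,a\rangle$ is abelian (because $x$ centralizes $K$), hence nilpotent of class $\leq k$, and it meets $N$ non-trivially since $1\neq x$ lies in both. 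The $\mathrm{NT}_k$ property of $\CSNk$ groups, applied to $N$ and $\langle x,a\rangle$, then gives that $\langle N,\langle x,a\rangle\rangle=\langle N,a\rangle$ is again nilpotent of class $\leq k$, so maximality of $N$ forces $a\in N$. Hence $K\subseteq N$.

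To finish, I would exploit malnormality just as in the CSA case. Because $G$ is not nilpotent it is in particular not nilpotent of class $\leq k$, so $N\neq G$; pick $g\in G\setminus N$. Normality of $K$ gives $K=K^g\subseteq N^g$, and together with $K\subseteq N$ this yields $1\neq K\subseteq N\cap N^g$, contradicting the malnormality of $N$. Therefore no such $K$ exists and $G$ is a domain. The proof is short given the available tools; the genuinely load-bearing point is the middle step, namely realizing that routing through the abelian subgroups $\langle x,a\rangle$ is what makes $\mathrm{NT}_k$ applicable and lets one absorb $K$ into one maximal class $k$ nilpotent subgroup. The only other thing needing a little care is the existence of $N$ containing $x$, which follows from the universal form of the class-$k$ nilpotency law.
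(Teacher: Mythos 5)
Your proof is correct and follows essentially the same route as the paper: both arguments exploit the abelian subgroups generated by the centralizing element together with elements of the normal subgroup, apply the $\mathrm{NT}_k$ property to trap the normal subgroup inside a single maximal class $k$ nilpotent subgroup, and then derive a contradiction from malnormality. The only (cosmetic) difference is that the paper first verifies the class $k$ nilpotency law on arbitrary $(k+1)$-tuples of $N$ and then chooses a maximal subgroup containing $N$, whereas you fix the maximal subgroup up front and absorb the elements of $K$ into it one at a time.
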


\begin{proof}
If $G$ is not a domain, then it contains a non-trivial normal subgroup $N$ such that $C_G(N)\neq 1$. Let $y\in C_G(N)$ be a non-identity element and $x_1, x_2, \ldots, x_{k+1}\in N$ be arbitrary. For every $i$, the subgroup $\langle x_i, y\rangle$ is abelian (and so, nilpotent of class $k$). Therefore, for any $i$ and $j$, both subgroups $\langle x_i, y\rangle$ and $\langle x_j, y\rangle$ are nilpotent of class $k$, and they have a non-trivial intersection. By the aforementioned, the subgroup $\langle x_i, x_j, y\rangle$ must be nilpotent of class $k$. Repeating this argument, we infer that the subgroup $\langle x_1, x_2, \ldots, x_{k+1}, y\rangle$ is nilpotent of class $k$ and hence, $[ x_1, x_2, \ldots, x_{k+1}]=1$; thereby, establishing that $N$ is nilpotent of class $k$. Now, let $M$ be a maximal class $k$ nilpotent subgroup containing $N$. For each $g\in G$, we have $1\neq N\subseteq M\cap M^g$ and by the malnormality of $M$, we must have $g\in M$. This means that $G$ is nilpotent which is a contradiction. Hence, $G$ is a domain.
\end{proof}

Let $A$ be a free nilpotent group of class $k$. In \cite{Shah}, the concept of an $A$-free group is defined: A group $F$ is $A$-free if and only if it is the free product of a set of copies of $A$. The case $A=\mathbb{Z}$ coincides with the concept of ordinary free group. In \cite{Shah} it is proved that every (fully residually) $A$-free group is $\CSNk$. As a result, we have the following.

\begin{corollary}
Let $A$ be a finitely generated free nilpotent group of class $k$ and $F$ be an $A$-free group. For a positive integer $n$, let $Y\subseteq F^n$be an irreducible algebraic set. Then the coordinate group $\Gamma(Y)$ is a $\CSNk$ group.
\end{corollary}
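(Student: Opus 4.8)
The plan is to derive the statement from the unification theorem (Theorem 1) together with two facts proved in \cite{Shah}: every $A$-free group is $\CSNk$, and, more generally, every fully residually $A$-free group is $\CSNk$. Thus the whole argument reduces to producing enough homomorphisms from $\Gamma(Y)$ into $A$-free groups, which is exactly what Theorem 1 supplies once it is applied with $F$ in the role of $G$.

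First I would check that $F$ is an equationally Noetherian domain, which is the hypothesis needed to invoke Theorem 1. For the domain part, note that $F$ is $\CSNk$ by \cite{Shah}; since we are in the genuine case where $F$ is a free product of at least two non-trivial copies of $A$ — the single-copy case $F=A$ is nilpotent and has to be excluded or handled separately — $F$ is not nilpotent, so Theorem 2 applies and $F$ is a domain. For equational Noetherianity I would invoke the corresponding fact for $A$-free groups: a finitely generated free nilpotent group is equationally Noetherian, and this property is inherited by $A$-free groups. This is the one ingredient not contained in the excerpt, and it is the point I would most want to pin down with a precise reference; it is a verification rather than a genuine obstacle.

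Granting this, Theorem 1 applies to the irreducible algebraic set $Y\subseteq F^n$ and yields that the coordinate group $\Gamma(Y)$ is fully residually $F$ as an $F$-group: for every finite set of non-trivial elements of $\Gamma(Y)$ there is a homomorphism into $F$ that is injective on that set. Since $F$ is itself $A$-free, this is precisely the statement that $\Gamma(Y)$ is fully residually $A$-free. Applying the result of \cite{Shah} that fully residually $A$-free groups are $\CSNk$, we conclude that $\Gamma(Y)$ is $\CSNk$. The argument is therefore a short concatenation of Theorem 1 with the structural results of \cite{Shah}, and I anticipate no real difficulty beyond confirming that $F$ is equationally Noetherian so that Theorem 1 is legitimately available.
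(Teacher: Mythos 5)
Your overall route is the same as the paper's: show $F$ is an equationally Noetherian domain, apply the unification theorem to conclude $\Gamma(Y)$ is fully residually $F$ (hence fully residually $A$-free), and then quote the result of \cite{Shah} that fully residually $A$-free groups are $\CSNk$. The domain half is handled exactly as in the paper (via Theorem 2, with the same implicit restriction to the non-nilpotent case $F\neq A$, which you are right to flag).

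The genuine gap is the step you dismiss as ``a verification rather than a genuine obstacle'': that equational Noetherianity of $A$ is ``inherited by $A$-free groups.'' Equational Noetherianity is not known to pass to free products by any soft argument --- preservation of this property under free products is a substantial theorem in its own right, not a routine verification --- and this step is in fact the entire technical content of the paper's proof of the corollary. The paper gets around it by proving that $F$ is \emph{linear}: by Jennings' theorem \cite{Jen}, the finitely generated torsion-free nilpotent group $A$ embeds into $\mathrm{UT}(r,\mathbb{Z})$, hence into $\mathrm{GL}_r(\mathbb{Q})$; by Marciniak's theorem \cite{Mar} (applicable because $[\overline{\mathbb{Q}}:\mathbb{Q}]$ is infinite), the free product $A\ast A$ embeds into $\mathrm{GL}_N(\mathbb{Q}(t))$; and $F$ embeds into $A\ast A$ by \cite{Shah}. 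Linear groups over a field are equationally Noetherian, so $F$ is too. To complete your argument you would need either this linearity chain or a citation to a genuine theorem on free products of equationally Noetherian groups; as written, the inheritance claim is an unsupported assertion at precisely the point where the real work lies.
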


\begin{proof}
By the previous theorem, $F$ is a domain. Our aim is to show that it is also equationally Noetherian.
A theorem of Jennings (see \cite{Jen}) shows that every finitely generated torsion-free nilpotent group embeds into $\mathrm{UT}(r, \mathbb{Z})$, the group of unipotent upper triangular integral matrices for some $r$. Hence $A$ embeds into $\mathrm{UT}(r, \mathbb{Z})$ and therefore, in $\mathrm{GL}_r(\mathbb{Q})$. As the degree of the algebraic closure $\overline{\mathbb{Q}}$ over $\mathbb{Q}$ is infinite, we can use the main result of \cite{Mar} to conclude that $A\ast A$ embeds in $\mathrm{GL}_N(\mathbb{Q}(t))$, for some positive integer $N$ and a variable $t$. This shows that $A\ast A$ is linear and hence, equationally Noetherian. Since $F$ embeds in $A\ast A$ (see \cite{Shah}), it is equationally Noetherian, as well. Now, we can apply Theorem 1 to see that $\Gamma(Y)$ is fully residually $A$-free and hence, by \cite{Shah}, it is $\CSNk$.
\end{proof}

In \cite{Shah}, another class of groups is introduced which also generalizes CSA groups. This is the class of all groups in which all maximal locally nilpotent subgroups are malnormal. It is shown that in such a group,  for every pair $K_1$ and $K_2$ of locally nilpotent subgroups, $K_1\cap K_2\neq 1$ implies that $\langle K_1, K_2\rangle$ is also locally nilpotent. It is also mentioned in \cite{Shah} that many properties of this new kind of groups coincide with those of $\CSNk$ groups. Below, we prove that every non-locally nilpotent group of this kind is also a domain.

\begin{theorem}
Let $G$ be a group which is not locally nilpotent. Let every maximal locally nilpotent subgroup of a group $G$ be malnormal. Then $G$ is a domain.
\end{theorem}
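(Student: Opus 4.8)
The plan is to transcribe the argument of the $\CSNk$ case almost verbatim, replacing ``nilpotent of class $k$'' by ``locally nilpotent'' throughout, and to pay attention only to the two places where this substitution is not purely formal: the passage from finitely generated subgroups to all of $N$, and the existence of a maximal locally nilpotent subgroup above a given locally nilpotent one.

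First I would argue by contradiction. If $G$ is not a domain, then (by the characterization of Diophantine domains recalled in Section~1) there is a non-trivial normal subgroup $N\leq G$ with $C_G(N)\neq 1$; fix a non-identity element $y\in C_G(N)$. To show $N$ is locally nilpotent, take an arbitrary finite subset $\{x_1,\ldots,x_m\}\subseteq N$. For each $i$ we have $[x_i,y]=1$, so $\langle x_i, y\rangle$ is abelian, hence locally nilpotent, and any two of these subgroups contain $y$ and therefore meet non-trivially. Using the property of this class of groups recorded above --- that two locally nilpotent subgroups with non-trivial intersection generate a locally nilpotent subgroup --- I would build up step by step: $\langle x_1, x_2, y\rangle$ is locally nilpotent; it meets $\langle x_3, y\rangle$ non-trivially in $y$, so $\langle x_1,x_2,x_3,y\rangle$ is locally nilpotent; and so on, concluding after $m$ steps that $\langle x_1,\ldots,x_m,y\rangle$ is locally nilpotent. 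Its finitely generated subgroup $\langle x_1,\ldots,x_m\rangle$ is then nilpotent. As the finite subset was arbitrary, $N$ is locally nilpotent.

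Next I would apply Zorn's lemma to embed $N$ in a maximal locally nilpotent subgroup $M$ of $G$; this is legitimate because the union of a chain of locally nilpotent subgroups is again locally nilpotent, any finitely generated subgroup of the union lying inside a single member of the chain. By hypothesis $M$ is malnormal. Since $N$ is normal and non-trivial, for every $g\in G$ we have $1\neq N\subseteq M\cap M^g$, hence $g\in M$; thus $G=M$ is locally nilpotent, contradicting the hypothesis. Therefore $G$ is a domain. The only points genuinely needing care --- and hence the ``main obstacle,'' though it is a mild one --- are these two local-to-global steps: verifying that a chain of locally nilpotent subgroups has locally nilpotent union (so that maximal locally nilpotent subgroups exist above every locally nilpotent subgroup), and the inductive assembly of $\langle x_1,\ldots,x_m,y\rangle$ from its rank-two pieces via the stated analogue of the $\mathrm{NT}_k$ property. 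Everything else is a direct copy of the proof of the $\CSNk$ theorem.
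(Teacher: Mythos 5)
Your proposal is correct and follows essentially the same strategy as the paper's proof: show that $N$ is locally nilpotent by assembling it from the abelian pieces $\langle x_i, y\rangle$, embed it in a maximal locally nilpotent subgroup via Zorn's lemma, and derive a contradiction with malnormality from $1\neq N\subseteq M\cap M^g$. The only (harmless) divergence is in the assembly step: you invoke the property from \cite{Shah} that two locally nilpotent subgroups with non-trivial intersection generate a locally nilpotent subgroup, whereas the paper keeps its argument self-contained by reproving the special case it needs directly from malnormality of a maximal locally nilpotent subgroup containing $\langle a, x_i\rangle$.
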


\begin{proof}
Suppose $G$ is not a domain. Then, it contains a non-trivial normal subgroup $N$ such that $C_G(N)\neq 1$; hence, there is a non-identity element $a\in G$ such that $[a, N]=1$. Let $x_1, x_2, \ldots, x_n\in N$ be arbitrary elements. For, each $i\in\{1,...,n\}$, let $K_i$ denote the abelian group $\langle a, x_i\rangle$ and consider two such groups $K_i=\langle a, x_i\rangle$ and $K_j=\langle a, x_j\rangle$. As the union of any chain of locally  nilpotent subgroups is locally nilpotent, there is, by Zorn's lemma, a maximal locally nilpotent subgroup $H$ containing $K_i$. We claim that $K_j\subseteq H$. If not, there is an element $z\in K_j\setminus H$ and, by malnormality of $H$, we have $H\cap H^z=1$. We know that $\langle a, z\rangle \subseteq K_j$ and hence $[a, z]=1$. This means that $a^{-1}z^{-1}az=1$ and so $z^{-1}az=a\in H$. As $H$ is malnormal, we must have $z\in H$; a contradiction. Therefore, our claim that $K_j\subseteq H$ is proved. Now, $K_i, K_j\subseteq H$ implies that the subgroup $\langle a, x_i, x_j\rangle$ is nilpotent. Applying this argument to the subgroups
$$
K_{ij}=\langle a, x_i, x_j\rangle, \ K_{r s}=\langle a, x_r, x_s\rangle.\,
$$
we conclude that the bigger subgroup $\langle a, x_i, x_j, x_r, x_s\rangle$ is nilpotent. Finally, continuing this argument, we conclude that the subgroup
$$
\langle a, x_1, x_2, \ldots, x_n\rangle
$$
is nilpotent and this means that $N$ is locally nilpotent. Using Zorn's lemma once again,  we may choose a maximal locally nilpotent subgroup $M$ containing $N$. For every $g\in G$, we have
$$
1\neq N\subseteq M\cap M^g,
$$
and, since $M$ is malnormal, we get $g\in M$; that is, $M=G$. But this is impossible as $G$ was hypothesised to be not locally nilpotent. This shows that $G$ is a domain which ends the proof.
\end{proof}

\end{document}